\numberwithin{equation}{section}
\newtheorem{definition}{Definition}[section]
\newtheorem{thm}[definition]{Theorem}
\newtheorem{ex}[definition]{Example}
\begin{document}
\title{On a simplicial monoid whose underlying simplicial set is not a quasi-category}
\author{Ryo Horiuchi}
\date{}

\maketitle

\section{Introduction}
It is well known that the underlying simplicial set of any simplicial group is a Kan complex (also known as $\infty$-groupoid or $(\infty, 0)$-category). Roughly speaking, Kan complex is an infinite-dimensional analogue of groupoid, and the relation between groupoids and categories resembles that between groups and monoids. Thus one may ask if the underlying simplicial set of each simplicial monoid is a quasi-category  (also known as  $\infty$-category or $(\infty, 1)$-category or weak Kan complex). In this short note, we construct a simplicial monoid whose underlying simplicial set is not a quasi-category.

Again it is well known that for any commutative group $G$ and any $n\in\mathbb{N}$ one can construct a simplicial group $\operatorname{K}(G, n)$ called the $n$-th Eilenberg-MacLane space of $G$, which is thus a Kan complex. This assignment gives rise to a functor from commutative groups to spectra, which again gives rise to a functor from rings to ring spectra that is crucial for the theory of higher algebra.

We apply this Eilenberg-MacLane space construction to commutative monoids to construct a simplicial monoid $\operatorname{K}(\mathbb{N}, 2)$ and show this does not satisfy a certain condition that any quasi-category must do. 

\section{Theorem}
Eilenberg-MacLane space construction for commutative monoids is already established. For example, see 3.1 in \cite{DLORV}. We simply apply such construction to commutative monoids instead of commutative groups. Let us first briefly recall the construction and see an example.

For a commutative monoid $M$ and a natural number $n\geq 1$, we have a simplicial monoid $\operatorname{K}(M, n)$ whose $k$-simplices is the reduced free monoid $M(S^n[k])$ generated by the set $S^n[k]$ of $k$-simplices in $S^n$ over $M$, where $S^n$ denotes the simplicial set $\Delta[n]/\partial\Delta[n]$. The simplicial structure on $\operatorname{K}(M, n)$ is induced from that on $S^n$. We may write $\operatorname{K}(M, 0)$ for the discrete simplicial monoid associated to $M$.

\begin{ex}{\rm Let $\mathbb{N}$ be the commutative monoid of natural numbers under the usual addition and $0$.
We write $\alpha(0)\alpha(1)...\alpha(m)$ for any map $\alpha:[m]\to[n]$ in $\Delta$. Then the sets of low dimensional simplicies of $S^2=\Delta[2]/\partial\Delta[2]$ can be given as the following:
\[S^2[0]=S^2[1]=\{*\}, \ 
S^2[2]=\{*, 012\}, \ 
S^2[3]=\{*, 0012, 0112, 0122\}.
\]
Therefore $\operatorname{K}(\mathbb{N}, 2)[0]=\operatorname{K}(\mathbb{N}, 2)[1]=*$ is the trivial monoid,
\[\operatorname{K}(\mathbb{N}, 2)[2]=\mathbb{N}\langle012\rangle\]
and
\[\operatorname{K}(\mathbb{N}, 2)[3]=\mathbb{N}\langle0012\rangle\oplus\mathbb{N}\langle0112\rangle\oplus\mathbb{N}\langle0122\rangle,\]
where, $\mathbb{N}\langle x\rangle$ denotes the free monoid generated by $x$. Moreover face maps $d_i:\operatorname{K}(\mathbb{N}, 2)[3]\to\operatorname{K}(\mathbb{N}, 2)[2]$ for $i\in\{0, 1, 2, 3\}$ are given by
\[d_0(a,b,c)=a, \ d_1(a,b,c)=a+b, \ d_2(a,b,c)=b+c, \ d_3(a,b,c)=c\]
for any $(a, b, c)\in\mathbb{N}\langle0012\rangle\oplus\mathbb{N}\langle0112\rangle\oplus\mathbb{N}\langle0122\rangle$}

\end{ex}

For every $[n]\in\Delta$ and $k\in[n]$, we let $\Lambda^k[n]$ denote the subsimplicial set of $\Delta[n]$ generated by its faces $\{\delta_i\in\Delta[n] \ | \ i\in[n]\setminus\{k\}\}$.

\begin{definition}[\cite{Joyal}]A simplicial set X is a quasi-category if it satisfies the following lifting property:

for any map \[\Lambda^k[n]\to X\] with $[n]\in\Delta$ and $k\in[n]\setminus\{0, n\}$, there exists a lift \[\Delta[n]\to X.\]
\end{definition}

\begin{thm}The Eilenberg-MacLane simplicial set $\operatorname{K}(M, n)$ for a commutative monoid $M$ is not necessarily a quasi-category. 
\end{thm}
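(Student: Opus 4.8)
The plan is to exhibit the single simplicial monoid $\operatorname{K}(\mathbb{N},2)$ already analyzed in the Example, and to write down an inner horn $\Lambda^1[3]\to\operatorname{K}(\mathbb{N},2)$ that admits no filler; every piece of data we need is in the explicit computation above, so the argument is essentially a bookkeeping exercise.

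First I would reduce the notion of a horn map to pure arithmetic. Since $\operatorname{K}(\mathbb{N},2)[0]=\operatorname{K}(\mathbb{N},2)[1]=*$, every vertex and every edge of $\operatorname{K}(\mathbb{N},2)$ is the trivial one, so the three $2$-faces spanning $\Lambda^1[3]$ — those corresponding to $\delta_0,\delta_2,\delta_3$ — impose no compatibility constraints on one another: $\Lambda^1[3]$ is the union of three copies of $\Delta[2]$ glued along shared edges, and in $\operatorname{K}(\mathbb{N},2)$ all of those shared edges are forced to the point of $\operatorname{K}(\mathbb{N},2)[1]$. Hence a simplicial map $\Lambda^1[3]\to\operatorname{K}(\mathbb{N},2)$ is nothing more than a triple $(\sigma_0,\sigma_2,\sigma_3)\in\operatorname{K}(\mathbb{N},2)[2]^{3}=\mathbb{N}^{3}$.

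Next I would express the filler condition using the face-map formulas from the Example. A filler is a $3$-simplex $(a,b,c)\in\operatorname{K}(\mathbb{N},2)[3]=\mathbb{N}\langle0012\rangle\oplus\mathbb{N}\langle0112\rangle\oplus\mathbb{N}\langle0122\rangle$ whose restriction along $\Lambda^1[3]\hookrightarrow\Delta[3]$ is the given horn, i.e.\ $d_0(a,b,c)=a=\sigma_0$, $d_2(a,b,c)=b+c=\sigma_2$, and $d_3(a,b,c)=c=\sigma_3$. Solving gives $a=\sigma_0$, $c=\sigma_3$, $b=\sigma_2-\sigma_3$, and this triple lies in $\mathbb{N}^{3}$ if and only if $\sigma_2\geq\sigma_3$. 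Therefore the horn determined by $\sigma_0=0$, $\sigma_2=0$, $\sigma_3=1$ has no lift $\Delta[3]\to\operatorname{K}(\mathbb{N},2)$; since $1\in[3]\setminus\{0,3\}$, this violates the defining lifting property of a quasi-category, so $\operatorname{K}(\mathbb{N},2)$, and hence $\operatorname{K}(M,n)$ for a general commutative monoid $M$, need not be a quasi-category.

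I do not anticipate a genuinely hard step. The two points requiring care are: (i) matching the indexing of the cofaces with the operators $d_0,d_2,d_3$ of a would-be filler, so that the obstruction is the incompatibility between the prescribed $d_2$- and $d_3$-faces; and (ii) justifying cleanly that, because the $0$- and $1$-simplices are trivial, an arbitrary triple of $2$-simplices genuinely assembles into a well-defined map out of $\Lambda^1[3]$. For symmetry one may run the same argument with $\Lambda^2[3]$ instead, where the filler equations force $b=\sigma_1-\sigma_0$ and the choice $\sigma_0=1$, $\sigma_1=0$ (with $\sigma_3$ arbitrary) again yields an unfillable inner horn.
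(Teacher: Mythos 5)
Your proposal is correct and follows essentially the same route as the paper: both exhibit an inner horn $\Lambda^1[3]\to\operatorname{K}(\mathbb{N},2)$ whose prescribed $2$- and $3$-faces are incompatible with the filler equations $d_2=b+c$, $d_3=c$ (the paper uses the values $1$ and $3$ where you use $0$ and $1$). Your explicit check that triviality of the $0$- and $1$-simplices makes any triple of $2$-simplices a valid horn map is a point the paper leaves implicit, but the argument is the same.
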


\begin{proof}Take $M$ to be $\mathbb{N}$ and $n$ to be 2. Consider a map
\[\Lambda^1[3]\to\operatorname{K}(\mathbb{N}, 2)\]
such that the 0-face maps to an arbitrary natural number, the 2-face to 1 and the 3-face to 3. Assume there is a lift
\[\Delta[3]\to \operatorname{K}(\mathbb{N}, 2)\]
and denote the image of the unique non-degenerate 3-simplex by $(a, b, c)\in\mathbb{N}\langle0012\rangle\oplus\mathbb{N}\langle0112\rangle\oplus\mathbb{N}\langle0122\rangle$.
As we have observed in the example above, its 0-face would be $a$, 2-face $b+c$, and 3-face $c$. Thus we should have $b+3=1$ with $b\in\mathbb{N}$. Therefore such lift can not exist. Hence $\operatorname{K}(\mathbb{N}, 2)$ is not a quasi-category.
\end{proof}

The existence of a simplicial monoid whose underlying simplicial set is not a quasi-category is probably well known to experts. But the author could not find any written proof. 

Note that, for any monoid $M$, $\operatorname{K}(M, 1)$ is the nerve of $M$ which is understood as the single object 1-category. Thus by \cite[Proposition 1.8]{Joyal} it is a quasi-category. Moreover, since $\operatorname{K}(M, 0)$ is discrete, it is again a quasi-category. Thus $\operatorname{K}(\mathbb{N}, 2)$ is the lowest dimensional example of Eilenberg-MacLane spaces whose underlying simplicial sets are not quasi-categories.

Note also that Street defined at \cite[Example 1.3]{Street}, for any monoid (resp. commutative monoid) $M$ and each $n\in\{0, 1\}$ (resp. $n\in\mathbb{N}$), the Eilenberg-MacLane strict $\omega$-category $\operatorname{K}^{\omega}(M, n)$ which is actually a strict $n$-category. Thus our main theorem may not be surprising. In addition, since $\operatorname{K}(\mathbb{N}, 2)$ is not a quasi-category, \cite[Example 57]{Verity} does not show whether the functor $(-)^e$ maps it to a weak complicial set or not.
It might be possible to construct Eilenberg-MacLane spaces, in an appropriate sense, as higher categories directly, but the author is not familiar with such theories. At least we know that the $\omega$-nerve of $\operatorname{K}^{\omega}(M, n)$ is a (weak) complicial set by \cite[Theorem 266]{V2}.

\end{document}